\DeclareMathOperator{\T}{\mathbb{T}}
\DeclareMathOperator{\Z}{\mathbb{Z}}
\DeclareMathOperator{\Schur}{Sch}
\DeclareMathOperator{\Riesz}{Rsz}
\newtheorem{theorem}{Theorem}[section]
\newtheorem{lemma}[theorem]{Lemma}
\theoremstyle{definition}
\theoremstyle{remark}
\newtheorem{remark}[theorem]{Remark}
\numberwithin{equation}{section}
\begin{document}

\def\now{26 June 2024}

\title[\now \hfill
Lacunary coefficients of~$H^1$-functions.
\hfill
]
{The missing proof of Paley's theorem\\
about lacunary coefficients}
\author{John J.F. Fournier}
\address{Department of Mathematics\\
University of British Columbia\\
1984 Mathematics Road\\
Vancouver BC\\
Canada V6T 1Z2}
\email{fournier@math.ubc.ca}
\thanks{Partially announced
at the 6th Conference on Function Spaces in May 2010.}
\thanks{Research supported by NSERC grant 4822.}

\subjclass[2020]{Primary {42A16};
Secondary {42A55, 43A17}.
}

\dedicatory{To the memory of Frank Forelli, who set me on this path.}

\date{\now}

\begin{abstract}
We modify the
classical
proof of Paley's
theorem about lacunary
coefficients of functions in~$H^1$
to work without analytic factorization.
This leads to the first direct proof of the extension
of Paley's theorem that we applied
to the former Littlewood conjecture about~$L^1$ norms of
exponential sums.
\end{abstract}

\maketitle

\markleft{
John J.F. Fournier
}
\pagestyle{myheadings}

\section{Introduction}\label{sec:intro}

Given an integrable function~$f$
on the interval~$(-\pi, \pi]$, form its
Fourier coefficients
\[
\hat f(n) = \frac{1}{2\pi}\int_{-\pi}^\pi
f(t)e^{-int}\,dt.
\]
Use the same measure~$(1/2\pi)\,dt$
in computing~$L^p$ norms.
Call a set of nonnegative integers
\emph{strongly lacunary}
if it is the range
of a sequence,~$(k_j)$ 
say, with the property that
\begin{equation}\label{eq:stronglylacunary}
k_{j+1} > 2k_j
\quad\textnormal{for all $j$.}
\end{equation}

In Section~\ref{sec:two-step}, we give a new
proof of the following statement.

\begin{theorem}\label{th:Paley}
There is a constant~$C$ so that
if~$K$ is strongly lacunary,
and if $\hat f(n) = 0$ when~$n<0$, then
\begin{equation}\label{eq:basicestimate}
\left[\sum_{k \in K} |\hat f(k)|^2\right]^{1/2}
\le C\|f\|_1.
\end{equation}
\end{theorem}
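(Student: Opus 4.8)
The plan is to reduce the $H^1$ estimate to a pair of Hilbert-space estimates by means of the Riesz factorization theorem, and then to extract the $\ell^2$ bound from the lacunarity condition~\eqref{eq:stronglylacunary} through an almost-orthogonality argument. First I would factor $f = gh$ with $g,h \in H^2$ and $\|g\|_2^2 = \|h\|_2^2 = \|f\|_1$; this is the standard consequence of inner--outer factorization, writing $f = I\cdot O$ and distributing a square root of the outer factor $O$ between the two pieces. Because $g$ and $h$ lie in $H^2$, their Fourier coefficients are supported on $\{0,1,2,\dots\}$, belong to $\ell^2$ with $\|\hat g\|_{\ell^2}=\|g\|_2$ and $\|\hat h\|_{\ell^2}=\|h\|_2$, and satisfy the convolution identity $\hat f(k) = \sum_{m=0}^{k}\hat g(m)\hat h(k-m)$.

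Next I would split each coefficient at the midpoint. For $k=k_j$ write $\hat f(k_j) = A_j + B_j$, where $A_j$ collects the terms with $0 \le m \le k_j/2$ and $B_j$ collects those with $k_j/2 < m \le k_j$. In $A_j$ the complementary index $n = k_j - m$ runs over $[k_j/2,\, k_j]$, so the Cauchy--Schwarz inequality gives
\[
|A_j| \le \|g\|_2\Big(\sum_{k_j/2 \le n \le k_j}|\hat h(n)|^2\Big)^{1/2},
\]
and symmetrically
\[
|B_j| \le \Big(\sum_{k_j/2 < m \le k_j}|\hat g(m)|^2\Big)^{1/2}\|h\|_2 .
\]

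The lacunarity now does the work. Since $k_{j+1} > 2k_j$, the left endpoint $k_{j+1}/2$ of the $(j+1)$-st block exceeds the right endpoint $k_j$ of the $j$-th block, so the intervals $(k_j/2,\, k_j]$ are pairwise disjoint as $j$ varies. Summing the two displayed inequalities over $j$ and using this disjointness to control each inner double sum by a single $\ell^2$ norm, I would obtain $\sum_j |A_j|^2 \le \|g\|_2^2\|h\|_2^2$ and $\sum_j|B_j|^2 \le \|g\|_2^2\|h\|_2^2$. The triangle inequality in $\ell^2$ then yields
\[
\Big(\sum_{k\in K}|\hat f(k)|^2\Big)^{1/2} \le 2\,\|g\|_2\|h\|_2 = 2\|f\|_1 ,
\]
which is~\eqref{eq:basicestimate} with $C = 2$.

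The routine parts are the two Cauchy--Schwarz estimates and the endpoint bookkeeping (one of the two midpoint intervals should be taken half-open so that no integer is double counted). The genuine obstacle---and, judging from the title, precisely what this paper sets out to circumvent---is the appeal to Riesz factorization: the entire reduction rests on writing $f\in H^1$ as a product of two $H^2$ functions, a piece of Hardy-space machinery that is unavailable for general $L^1$ functions and that one would ideally like to replace by a direct argument operating on $f$ itself.
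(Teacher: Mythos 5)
Your proof is correct, and it even achieves the same constant $C=2$ as the paper, but it is not the paper's argument: it is essentially the classical proof via Riesz (inner--outer) factorization, which is precisely the ingredient this paper is written to eliminate. You factor $f=gh$ with both factors in $H^2$, and the analyticity of the factors is what makes the convolution sum $\hat f(k_j)=\sum_{m=0}^{k_j}\hat g(m)\hat h(k_j-m)$ finite and the midpoint splitting, Cauchy--Schwarz, and block-disjointness argument go through; your endpoint bookkeeping and the use of $k_{j+1}>2k_j$ to separate the blocks are all sound. The paper instead factors $f=g\overline h$ with $|g|=|h|$ but with \emph{no} condition on the coefficients of the factors (e.g.\ $h=|f|^{1/2}$, $g=f/|f|^{1/2}$), and compensates for the lost analyticity by replacing your blocks of exponentials with the subspaces $L_j$ spanned by the translates $z^nh$, $n<-k_j$, of the factor $h$ itself; the hypothesis $\hat f(n)=0$ for $n<0$ enters as the orthogonality $g\perp A_jL_j$ through the identity $(g,z^nh)=\hat f(n)$, and the bound follows from the splitting $(g,A_jh)=a_j+b_j$ and Bessel-type estimates along two nests of projections $P_j$, $Q_j$. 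What your route buys is brevity and familiarity; what it costs is exactly what the paper is after: factorization into $H^2$ functions is a one-dimensional Hardy-space fact, unavailable for compact groups with partially ordered duals (Theorem~\ref{th:partialPaley}), for operator-valued functions, and above all for the refinements in Theorems~\ref{th:S(K)} and~\ref{th:S_1(K)}, where $\hat f$ is only required to vanish on the much smaller set $\Schur((k_j))$, so that $f$ need not lie in $H^1$ and no inner--outer factorization exists. Your closing paragraph diagnoses this trade-off correctly.
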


Paley's proof~\cite{REACH1}
of this
used
``analytic''
factorization
of such functions~$f$ as products
of two measurable functions
with the same absolute value
and
with Fourier coefficients that
also vanish at all negative integers.
We use factors with the same absolute value,
but we do not
require that their coefficients vanish anywhere.
Paley's proof used orthogonal projections
of~$L^2$ onto subspaces determined by the set~$K$.
We use subspaces that
may
also depend
on the choice of factors.

This
allows
us to give the first direct proofs of some refinements,
stated here as Theorems~\ref{th:S(K)} and~\ref{th:S_1(K)},
of Paley's theorem.
They
were proved
in
a dual way
in~\cite{FouArk},
and
used
there
to give a new proof of
``half'' of
the Littlewood conjecture
about~$L^1$ norms
of exponential sums.

We
prove
Paley's theorem in the next section.  In Section~\ref{sec:ordered}, we
extend
this to compact abelian groups with partially-ordered duals.  We use Riesz products in Section~\ref{sec:totalize} to deduce some of these extensions from
previously-known
results for totally-ordered dual groups.  In Section~\ref{sec:fewer}, we  show that our new method works with weaker
hypotheses.
We
weaken those further in Section~\ref{sec:completing}, using Riesz products again.
Finally,
in an appendix,
we
examine the relation between
the method in this paper
and
the one that was applied to Paley's theorem in~\cite{KP}.

\begin{remark}
\label{Operator-valued}
The functions in this paper are scalar-valued.
Our
methods
are applied
to some operator-valued 
functions in~\cite{FouArXiv},
and yield a new proof of the main result
in~\cite{LP}.
\end{remark}

\section{Pairs of nested projections}
\label{sec:two-step}

\begin{proof}[Proof of Paley's theorem]
When~$f$ satisfies the hypotheses of Theorem~
\ref{th:Paley},
factor~$f$ as~$g\overline h$,
where~$g$ and~$h$ are
measurable,
and~$|g| = |h|$.
%
Let~$z$
denote
the exponential function
mapping each number~$t$
in the interval~$(-\pi, \pi]$ to~$e^{it}$.
Then~$g$ and the products~$z^n h$
belong to~$L^2(\pi, \pi]$. Consider the inner products
\begin{equation}\label{eq:coefficients}
(g, z^n h)
= \frac{1}{2\pi}\int_{-\pi}^{\pi}
g(t)\overline{h(t)}e^{-int}\,dt
= \hat f(n).
\end{equation}

It suffices to prove
inequality~\eqref{eq:basicestimate}
when the set~$K$ is the range of a finite
increasing sequence~$(k_j)_{j=1}^J$.
Let~$A_j$ be the operator
on~$L^2(-\pi, \pi]$
that multiplies each function
by~$z^{k_j}$.
Then
\begin{equation}
\label{eq:InnerProducts}
(g, A_jh)
= \hat f(k_j).
\end{equation}
This reduces matters
to showing that
there is a constant~$C$ so that
\begin{equation}\label{eq:HilbertEstimate}
\left[\sum_{j=1}^J
|(g, A_jh)|^2\right]^{1/2}
\le C\|g\|_2\|h\|_2\quad\textrm{for all~$J$.}
\end{equation}

Let~$L_j$ be the closure
in~$L^2$ of the subspace spanned by the
products~$z^n h$
in which~$n < -k_j$.
Let~$P_j$
project~$L^2$ orthogonally onto~$L_j$.
These projections form a decreasing
nest as~$j$ increases.

Also consider
the subspaces~$A_jL_j$
and~$A_{j+1}L_j$, where~$j < J$ for the latter.
Every image~$A_jL_j$ is the closure in~$L^2$ of the span of the products~$z^n h$ for which~$n < 0$. By formula~(\ref{eq:coefficients})
and the hypothesis that~$\hat f(n) = 0$ for all~$n < 0$,
the function~$g$ is orthogonal
to~$A_jL_j$
for all~$j$.

The image~$A_{j+1}L_j$
is the closure of the span
of the products~$z^n h$
for which~$n < k_{j+1} - k_j$.
Strong
lacunarity
is equivalent to having
\begin{equation}\label{eq:altHadam}
k_{j} < k_{j+1} - k_{j}.
\end{equation}
It follows that
\begin{equation}
\label{eq:Membership}
A_jh \in A_{j+1}L_j
\quad\text{when~$j < J$.}
\end{equation}
Since~$k_j - k_{j-1} \le k_j< k_{j+1}-k_j$, the subspaces~$A_{j+1}L_j$ increase
as~$j$ increases.
Let~$Q_{j}$ project
orthogonally onto~$A_{j+1}L_j$ when~$1 \le j < J$;
let~$Q_0 = 0$, and~$Q_{J} = I$.
The projections~$Q_j$ form an increasing nest.

For each~$j$, these
choices and the membership condition~\eqref{eq:Membership}
make
\[
A_jh = Q_jA_jh
\quad\text{and}\quad
(g, A_jh) = (g, Q_{j}A_jh) = (Q_{j}g, A_jh).
\]
Rewrite the latter in the form
\begin{equation}
\label{eq:TwoStep}
(\{Q_j - Q_{j-1}\}g,A_jh)
+ (Q_{j-1}g, A_j h)
=
a_j + b_j
\quad\text{say.}
\end{equation}
By the
Cauchy-Schwarz inequality, the fact
that the operators~$A_j$ are contractions,
and the nesting of the projections~$Q_j$,
\begin{equation}\label{eq:outer}
\|(a_j)\|_2^2
\le \left(\sum_{j=1}^J\|\{Q_j - Q_{j-1}\}g\|_2^2\right)\|h\|_2^2
\le \|g\|_2^2\|h\|_2^2.
\end{equation}

Now~$b_1 = 0$,
because~$Q_0 = 0$.
Since~$A_j$ is unitary,~$A_jP_{j-1} =
Q_{j-1}A_j$ when~$j > 1$,
and then
\begin{equation}
\label{eq:b's}
b_j
= (g, Q_{j-1}A_j h)= (g, A_jP_{j-1} h).
\end{equation}
The fact that~$g\perp A_jL_j$
then
makes~$(g, A_jP_jh) = 0$,
and it follows that~$b_j =
(g, A_j\{P_{j-1} - P_j\}h)$.
So~$\|(b_j)\|_2^2 \le \|g\|_2^2\|h\|_2^2$ 
too, and
inequality~(\ref{eq:HilbertEstimate})
holds
with~$C=2$.
\end{proof}

The refinements
in Theorems~\ref{th:S(K)} and~\ref{th:S_1(K)} below were
proved in~\cite{FouArk}
using dual methods.
Here,
Theorem~\ref{th:S(K)} will follow from
an
analysis of the 
direct
proof above.
The notions 
in
the next two sections will then allow us to deduce
Theorem~\ref{th:S_1(K)}.

\begin{remark}
\label{rm:traditional}
To organize
Paley's proof
in the same way,
require that the factors~$g$ and~$\overline h$ both be analytic.
Replace
the subspaces~$L_j$ above
with
the closures in~$L^2$ of the
spans
of the functions~$z^n$ for which~$n < -k_j$;
in
many
cases,
these subspaces
are larger than the
ones
used above, 
but they nest as before,
as do
their images~$A_{j+1}L_j$,
which are generated by the functions~$z^n$ for which~$n < k_{j+1}-k_j$. Assuming that~$\hat g(n) = 0$ for all~$n < 0$ guarantees
that~$g \perp A_jL_j$ for all~$j$,
because~$A_jL_j$
is generated by
the function~$z^n$ for which~$n < 0$. 
Assuming that~$\overline h$ is analytic makes~$\hat h(n) = 0$ for all~$n > 0$.
It follows that~$A_jh \in A_{j+1}L_j$
when~$j < J$, and the rest of the proof above applies.
\end{remark}



\begin{remark}
The subspaces~$L_j$ that we 
used in the proof of Theorem~\ref{th:Paley} are invariant under multiplication by~$\overline z$, and their conjugates are invariant under multiplication by~$z$.
In~\cite{Forelli}, 
it
was
observed
that
those conjugate subspaces must be simply 
invariant when~$f$ is analytic, 
and 
the characterization of simply invariant subspaces of~$L^2(\T)$
was then used
to show that both factors~$\overline h$ and~$g$ of~$f$ can be chosen to be analytic too.
\end{remark}

\section{Partially ordered dual groups}\label{sec:ordered}

Our proof of Theorem~\ref{th:Paley}
resembles the one given in~\cite[Section~2]{FouPac} for the following statement,
which differs
only in the set where~$\hat f$
is required to vanish.

\begin{theorem}\label{th:complementary}
There is a constant~$C$ so that
if~$K$ is strongly lacunary,
and if~$\hat f(n) = 0$ for all positive integers~$n$ lying outside~$K$, then
\begin{equation}
\label{eq:complementaryestimate}
\left[\sum_{k \in K} |\hat f(k)|^2\right]^{1/2}
\le C\|f\|_1.
\end{equation}
\end{theorem}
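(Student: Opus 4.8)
The plan is to mirror the structure of the proof of Theorem~\ref{th:Paley} as closely as possible, adapting only the definitions of the nested subspaces to reflect the new vanishing hypothesis. As before, I would factor $f = g\overline h$ with $|g|=|h|$, reduce to a finite lacunary sequence $(k_j)_{j=1}^J$, and set $A_j$ to be multiplication by $z^{k_j}$, so that $(g,A_jh)=\hat f(k_j)$ and the task is again to establish inequality~\eqref{eq:HilbertEstimate}. The difference is that now $\hat f$ is permitted to be nonzero at negative integers but must vanish at positive integers outside $K$, so the orthogonality of $g$ to the relevant image subspaces must be arranged differently.

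First I would choose the inner subspaces $L_j$ to capture the ``forbidden'' frequencies between consecutive lacunary points on the positive side rather than on the negative side. Concretely, I expect to take $L_j$ to be the closure of the span of the products $z^n h$ with $k_j < n < k_{j+1}$ (together with whatever is needed at the two ends, handled by setting $Q_0=0$ and $Q_J=I$ as before), so that after shifting by $A_j$ the function $g$ becomes orthogonal to each shifted subspace precisely because $\hat f(n)=0$ for positive $n \notin K$. The point of the strong lacunarity condition~\eqref{eq:altHadam}, namely $k_j < k_{j+1}-k_j$, is exactly what guarantees that these gap-subspaces are long enough to contain the shifted copy $A_jh$ and that the associated projections form a monotone nest. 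I would then define the outer projections $Q_j$ onto the appropriately increasing images and verify the two membership and commutation relations that make the telescoping decomposition~\eqref{eq:TwoStep} go through verbatim.

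The estimate then splits, as in the earlier proof, into a term $a_j$ controlled by the orthogonality of the increments $(Q_j-Q_{j-1})g$ via Cauchy--Schwarz and the contractivity of the $A_j$, giving the bound in~\eqref{eq:outer}, and a term $b_j$ that is rewritten using the unitarity of $A_j$ together with the commutation relation $A_jP_{j-1}=Q_{j-1}A_j$ and the orthogonality of $g$ to the shifted subspaces, collapsing $b_j$ into an expression involving the projection difference $P_{j-1}-P_j$. Both families $(a_j)$ and $(b_j)$ then have $\ell^2$ norms bounded by $\|g\|_2\|h\|_2$, yielding~\eqref{eq:HilbertEstimate} with $C=2$ and hence~\eqref{eq:complementaryestimate}.

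The main obstacle I anticipate is getting the roles of the positive and negative frequency ranges, and the corresponding endpoint conventions for $Q_0$ and $Q_J$, correctly aligned so that the two orthogonality facts---$g$ perpendicular to the shifted inner subspaces and the membership $A_jh$ lying in the next outer subspace---both hold simultaneously under the hypothesis $\hat f(n)=0$ for positive $n\notin K$. Since $f$ is now allowed to have negative-frequency content, I would need to check carefully that the shift $A_j$, which moves frequencies up by $k_j$, interacts with the gap $(k_j,k_{j+1})$ in a way that still places $g$ orthogonal to the right subspace; verifying this compatibility, rather than any analytic difficulty, is where the argument must be set up with care, and it is essentially the content of the cited treatment in~\cite[Section~2]{FouPac}.
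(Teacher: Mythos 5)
Your overall strategy (factor $f=g\overline h$ with $|g|=|h|$, reduce to inequality~\eqref{eq:HilbertEstimate}, split via two nests of projections) is the right one, but your concrete choice of subspaces does not work, and the ``compatibility check'' you defer to the literature at the end is exactly where it breaks. Take $L_j$ to be the closed span of $\{z^nh : k_j<n<k_{j+1}\}$, as you propose. Then: (i) these subspaces cannot form a monotone nest, since the spanning frequency ranges for different $j$ are pairwise disjoint; (ii) the membership condition fails, because $A_jh=z^{k_j}h$ and $k_j$ is an endpoint excluded from every gap $(k_i,k_{i+1})$, so no shift of a gap subspace contains $A_jh$ for formal reasons (and the proof needs formal membership, i.e.\ the relevant frequency must lie in the spanning set); and (iii) the orthogonality you assert for the shifted subspaces is false: $A_jL_j$ is the closed span of $\{z^mh : 2k_j<m<k_j+k_{j+1}\}$, and since strong lacunarity gives $2k_j<k_{j+1}$, this subspace contains $z^{k_{j+1}}h$ (whenever $k_j\ge 1$), whereas $(g,z^{k_{j+1}}h)=\hat f(k_{j+1})$ is precisely a coefficient being estimated, not one that vanishes. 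So the argument cannot ``go through verbatim'' with these $L_j$.

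What the paper actually does (Remark~\ref{rm:ComplementaryProof}) is to keep the spanning frequencies on the \emph{negative} side: $M_j$ is the closed span of $\{z^nh : -k_j\le n<0\}$. These increase with $j$, as do their images $A_jM_j$ (spans of $z^mh$ with $0\le m<k_j$), giving two \emph{increasing} nests of projections, $P_j$ onto $M_j$ and $Q_j$ onto $A_jM_j$, with the endpoint conventions $P_0=0$ and $Q_{J+1}=I$ (not $Q_0=0$, $Q_J=I$). The hypothesis enters through the shifted subspaces $A_{j+1}M_j$, spanned by $z^mh$ with $k_{j+1}-k_j\le m<k_{j+1}$: strong lacunarity ($k_{j+1}-k_j>k_j$) places this range strictly inside the gap $(k_j,k_{j+1})$, where $\hat f$ vanishes, so $g\perp A_{j+1}M_j$; and the membership $A_jh\in A_{j+1}M_{j+1}$ is immediate since $0\le k_j<k_{j+1}$. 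The splitting is also not the one in~\eqref{eq:TwoStep}: one sets $a_j=((Q_{j+1}-Q_j)g,A_jh)$ and $b_j=(g,A_j(P_j-P_{j-1})h)$, expands $a_j+b_j$ into four inner products, uses $Q_jA_j=A_jP_j$ (unitarity of $A_j$) to cancel the middle two and the orthogonality $g\perp A_jM_{j-1}$ to kill the last, leaving exactly $(g,A_jh)=\hat f(k_j)$; Cauchy--Schwarz on the orthogonal increments of the two nests then gives $C=2$. In short, your instinct to put the subspaces in the positive-frequency gaps is the step that must be abandoned: the gaps are where orthogonality is \emph{used}, via the lacunary shift $A_{j+1}$, not where the subspaces live.
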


Various other
methods
in~\cite[p.~533--4]{Mey},
~\cite{Vino} and~\cite[Theorem~10]{FouPac}
derive
this
conclusion
from weaker
conditions on~$K$
or~$f$.
In Remark~\ref{rm:ComplementaryProof} below,
we outline our
direct
proof of Theorem~\ref{th:complementary}.
That proof extended 
to compact abelian groups with
partially
ordered duals.

As in~\cite[Section~8.1]{RudGp},
where the dual group~$\Gamma$ is written additively,
total orders arise when
there is an additive semigroup~$P$
with the two properties
\begin{equation}\label{eq:total}
P\cap (-P) = \{0\},
\qquad
P\cup(-P) = \Gamma.
\end{equation}
We
then
write
that~$\gamma \le \gamma'$ when~$\gamma'-\gamma \in P$.
Partial orders arise
in
the same
way
when
the nonnegative cone~$P$ need only satisfy the first condition above.
We now confirm that our new proof of Paley's theorem
extends
to that setting.


Call a subset~$K$ of~$P$
\emph{strongly lacunary}
if for each pair~$\gamma$ and~$\gamma'$ of distinct members of~$K$,
one of the differences~$\gamma - 2\gamma'$ or~$\gamma' - 2\gamma$
belongs to the strictly positive
cone~$P' = P\backslash\{0\}$.
%
The following extension of
Theorem~\ref{th:Paley} is
known~\cite[Section~8.6]{RudGp},
with a different proof,
in the cases where
the partial order on~$\Gamma$
is a total order.

\begin{theorem}\label{th:partialPaley}
There is a constant~$C$
with the following property.
Let~$G$ be a compact abelian group
with a partially ordered dual~$\Gamma$.
Let~$K$ be strongly lacunary
relative to that order.
If~$f \in L^1(G)$,
and~$\hat f(\gamma) = 0$ for all characters~$\gamma$
in the strictly negative cone~$-P'$, then
\begin{equation}\label{eq:partPaley}
\left[\sum_{\gamma \in K} |\hat f(\gamma)|^2\right]^{1/2}
\le C\|f\|_1.
\end{equation}
\end{theorem}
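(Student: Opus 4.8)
The plan is to transplant the proof of Theorem~\ref{th:Paley} verbatim, replacing the circle group by an arbitrary compact abelian group $G$ with partially ordered dual $\Gamma$, and replacing the multiplier characters $z^{k_j}$ by the characters in $K$ themselves. Concretely, when $f\in L^1(G)$ satisfies the hypotheses, I would first write $f = g\overline{h}$ with $|g|=|h|$, so that $g,h\in L^2(G)$ and $\|g\|_2\|h\|_2 = \|f\|_1$; the inner product $(g,\gamma h)$ equals $\hat f(\gamma)$ for each character $\gamma$. As before, it suffices to treat a finite $K=\{\gamma_1,\dots,\gamma_J\}$, and since the indices carry a partial rather than total order, the first genuinely new point is to choose a labeling of $K$ compatible enough with the order for the nesting arguments to run. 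I would let $A_j$ be multiplication by $\gamma_j$ (a unitary operator on $L^2(G)$), reducing everything to the bilinear estimate
\begin{equation}\label{eq:partHilbert}
\left[\sum_{j=1}^J |(g,A_jh)|^2\right]^{1/2}\le C\|g\|_2\|h\|_2.
\end{equation}

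Next I would rebuild the two nests of projections. Let $L_j$ be the closure in $L^2$ of the span of the products $\chi h$ where $\chi$ ranges over characters with $\chi\gamma_j^{-1}\in -P'$ (the analogue of ``$n<-k_j$''), and let $P_j$ project onto $L_j$. The semigroup property of $P$ and its images under translation should make the $L_j$ into a decreasing nest, and the hypothesis $\hat f=0$ on $-P'$ should force $g\perp A_jL_j$ for every $j$, exactly as in the scalar case. The role of the inequality $k_j<k_{j+1}-k_j$ is now played by the strong-lacunarity condition: for distinct $\gamma,\gamma'\in K$ one of $\gamma-2\gamma'$, $\gamma'-2\gamma$ lies in $P'$. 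This is what guarantees both the membership $A_jh\in A_{j+1}L_j$ and the increasing nesting of the subspaces $A_{j+1}L_j$; I would define $Q_j$ to project onto $A_{j+1}L_j$ for $1\le j<J$, with $Q_0=0$ and $Q_J=I$, obtaining an increasing nest. With these in hand the telescoping decomposition
\begin{equation}\label{eq:partTwoStep}
(g,A_jh) = ((Q_j-Q_{j-1})g,A_jh) + (Q_{j-1}g,A_jh) = a_j+b_j
\end{equation}
goes through, the $a_j$ are controlled by orthogonality of the increments of $Q_j$ together with $\|A_j\|=1$, and the $b_j$ are rewritten using the unitarity identity $A_jP_{j-1}=Q_{j-1}A_j$ and $g\perp A_jL_j$ to become $(g,A_j(P_{j-1}-P_j)h)$, bounded by the increments of the $P_j$ nest. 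Summing gives \eqref{eq:partHilbert} with $C=2$.

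The main obstacle is the bookkeeping forced by the partial order. In the scalar proof the chain $k_1<k_2<\cdots$ makes ``$j<j+1$'' and the relevant translates automatically comparable, so the two nests are literally nested. Under only a partial order I must verify that the strong-lacunarity hypothesis still delivers (i) the decreasing nesting of the $L_j$, (ii) the membership \eqref{eq:Membership}, and (iii) the increasing nesting of the $A_{j+1}L_j$, for \emph{some} enumeration of $K$ — and it is not obvious that one fixed linear enumeration serves all three simultaneously. I would first check whether the pairwise condition on $\gamma-2\gamma'$ lets me linearly order $K$ so that consecutive differences $\gamma_{j+1}-\gamma_j$ dominate $\gamma_j$ in the sense needed; if a single order is too rigid, the likely fix is to note that the estimate \eqref{eq:partHilbert} is symmetric in the terms, so I may sum over any enumeration, and to extract the nesting relations pairwise from strong lacunarity rather than from a global monotonicity. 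Once the three nesting facts are secured from the hypothesis alone, the rest of the argument is identical to the scalar case and needs no new idea.
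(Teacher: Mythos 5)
Your plan is the paper's own proof: the same factorization, the same multiplication operators $A_j$, the same two nests of projections, the same splitting into $a_j+b_j$, and the same constant $C=2$. But you stop exactly at the one point where something new must be said, and you leave it unresolved: whether some enumeration of $K$ simultaneously delivers (i) the decreasing nesting of the $L_j$, (ii) the membership $A_jh\in A_{j+1}L_j$, and (iii) the increasing nesting of the images $A_{j+1}L_j$. That is a genuine gap. Moreover, your fallback — that the estimate is ``symmetric in the terms,'' so the nesting relations might be extracted pairwise — is a dead end: the argument requires the $Q_j$ to form a single increasing nest and the $P_j$ a single decreasing nest, which is an inherently global chain condition, not a pairwise one.

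Your first instinct was the right one, and it can be completed in a few lines; this is what the paper's terse instruction ``enumerate $K$ in increasing order'' presupposes. The definition of strong lacunarity requires $K\subset P$ and, for distinct $\gamma,\gamma'\in K$, that one of $\gamma-2\gamma'$, $\gamma'-2\gamma$ lies in $P'$. Since $P'+P\subset P'$ (if $p'+p=0$ with $p'\in P'$, $p\in P$, then $p'\in P\cap(-P)=\{0\}$, a contradiction), the first alternative gives $\gamma-\gamma'=(\gamma-2\gamma')+\gamma'\in P'$, i.e.\ $\gamma'<\gamma$, and the second gives $\gamma<\gamma'$. So any two members of $K$ are comparable: $K$ is a chain, and a finite $K$ can be enumerated increasingly as $(\gamma_j)_{j=1}^J$. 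With that enumeration, antisymmetry ($P\cap(-P)=\{0\}$) rules out $\gamma_j-2\gamma_{j+1}\in P'$, so $\gamma_{j+1}-2\gamma_j\in P'$ for consecutive terms — the exact analogue of condition~\eqref{eq:altHadam} — and the three facts follow as in Section~\ref{sec:two-step}: (i) from $\gamma_j<\gamma_{j+1}$ and transitivity; (ii) from $\gamma_{j+1}-2\gamma_j\in P'$; (iii) from $\gamma_j-\gamma_{j-1}\le\gamma_j<\gamma_{j+1}-\gamma_j$, where the first inequality uses $\gamma_{j-1}\in P$, i.e.\ $K\subset P$. One further slip to correct: $L_j$ must be spanned by the products $\chi h$ with $\chi\gamma_j\in -P'$ (the additive condition $\chi<-\gamma_j$), not $\chi\gamma_j^{-1}\in -P'$ as you wrote; with your condition, $A_jL_j$ is spanned by the $\chi' h$ with $\chi'<2\gamma_j$, so $g\perp A_jL_j$ would require $\hat f$ to vanish on $\{\chi':\chi'<2\gamma_j\}$, which is not the hypothesis.
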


\begin{proof}
Without loss of generality,~$K$ is finite. Enumerate it
in increasing order as~$(\gamma_j)_{j=1}^J$.
Factor~$f$ measurably as~$g\overline h$ with~$|g| = |h|$.
Make the following choices for each~$j$.
Let~$A_j$ be the operator that multiplies
each function
in~$L^2(G)$
by~$\gamma_j$.
Let~$L_j$ be the closure
in~$L^2(G)$
of the subspace spanned
the products~$\gamma h$
in which~$\gamma < -\gamma_j$.
Define the nested projections~$P_j$ and~$Q_j$ as before,
and split the inner product~$(g, A_jh)$ in the same way
to get inequality~(\ref{eq:partPaley}) with~$C=2$.
\end{proof}

\begin{remark}\label{rm:Complementary}
\label{rm:ComplementaryProof}
In~\cite
{FouPac},
we proved Theorem~\ref{th:complementary}
using the same
factorization and the same operators~$A_j$ as in our proof of
Theorem~\ref{th:Paley},
but
using
the
subspaces~$M_j$ spanned by the products~$z^nh$
in which~$-k_j \le n < 0$.
Those subspaces form an increasing nest, as do their images~$A_jM_j$. We used orthogonal 
projections,~$P'_j$
and~$Q'_j$ say,
with ranges~$M_j$ and~$A_jM_j$ respectively, 
also
letting~$P'_0 = 0$,
and~$Q'_{J+1} = I$.
When~$j < J$, the subspace~$A_{j+1}M_j$ is spanned by the products~$z^mh$ for which~$m$ lies in the
half-open
interval~$[k_{j+1} - k_j, k_{j+1})$.  By strong lacunarity, these integers~$m$ all fall in the gap between~$k_j$ and~$k_{j+1}$.
Since~$\hat f$ vanishes in 
these
gaps,~$g \perp A_{j+1}M_j$
for all~$j < J$. Also,~$A_jh \in A_{j+1}M_{j+1}$
for these values of~$j$.
So~$(g, A_jh)$ splits
here
as~$a'_j + b'_j$,
where
\begin{equation}
\label{eq:a' and b'}
a'_j
=
(\{Q_{j+1}-Q_j\}g, A_jh),
\quad\text{and}\quad
b'_j
= (Q_jg, A_jh).
\end{equation}
Then~$b'_j = (g, A_jP_jh)$.
This
can be rewritten
as~$(g, A_j\{P'_j - P'_{j-1}\}h)$,
because~$P'_{j-1} = 0$
when~$j=1$,
and~$g$
is orthogonal
to~$A_jM_{j-1}$
in the remaining cases.
Estimate~$\ell^2$ norms as 
above.
\end{remark}

\begin{remark}
The proof just above was derived from
Paley's proof of his
Theorem~\ref{th:Paley},
but it no longer worked for
that
theorem. Our new proof of the latter
resulted from a study of
the
argument
in Remark~\ref{rm:Complementary}
and the proof,
using analytic factorization
and projections onto finite-dimensional subspaces,
of the version of
Theorem~\ref{th:Paley}
in~\cite{KP}.
See
Appendix~\ref{sec:compare}
for more about the latter proof.

\end{remark}


\begin{remark}
The dual method in~\cite{FouArk} shows that the best constant in Theorems~\ref{th:Paley}
and~\ref{th:partialPaley}
is~$\sqrt2$.
The dual method in~\cite{Clu}
and~\cite{FouPAMS} 
shows that the best constant in Theorem~\ref{th:complementary}
is at most~$\sqrt e$.
Theorem~\ref{th:partialPaley}
also
follows, with constant~$2$,
by the dual method
in~\cite{PS} and~\cite{Smith}.
\end{remark}

\begin{remark}
In Theorem~\ref{th:partialPaley}, the set where the coefficients are required to vanish is no larger than a half space. Other methods~\cite{Ob} work when that set is significantly larger than a half 
space, and
yield inequality~\eqref{eq:partPaley} for more sets~$K$.
\end{remark}

\section{Finite Riesz products}
\label{sec:totalize}

We
consider
Fourier coefficients of certain measures
in
the proof of
Theorem~\ref{th:S_1(K)}.
We
confirm here that
Theorem~\ref{th:partialPaley}
extends
to regular Borel measures,
with the usual convention that
\[
\hat u(\gamma) = \int_G \overline{\gamma(x)}
\,d\mu(x),
\]
for such a measure~$\mu$.
We also show
how
Theorem~\ref{th:partialPaley}
follows in most cases of interest,
with a larger constant~$C$,
from its special case where the order is total.

Denote the total variation of~$\mu$ by~$\|\mu\|$.
Continue to work with a partial order on~$\Gamma$.
Suppose throughout this section that~$\hat u$ vanishes
on the strictly negative cone~$-P'$.

Given a finite subset~$K$ of~$\Gamma$,
let~$K' = K \backslash\{0\}$.
Recall some properties of
the product
\[
R_K := \prod_{\gamma' \in K'}
\left(1 + \frac{\gamma' + \overline{\gamma'}}{2}\right).
\]
 of nonnegative
 factors.  It expands
 as
 as
 \[
 \sum_\gamma
 c(\gamma)\gamma
 \]
in
which~$c(\gamma) \ne 0$
only when~$\gamma = \prod_{\gamma'\in K'}
(\gamma')^{\varepsilon_{\gamma'}}$,
where~$\varepsilon_{\gamma'} \in \{-1, 0, 1\}$
in all cases.
In the additive notation for~$\Gamma$,
\begin{equation}\label{eq:trimmed}
\gamma = \sum_{\gamma'\in K'}
\varepsilon_{\gamma'}
\gamma'.
\end{equation}
Denote the set of such characters~$\gamma$ by~$\Riesz(K)$; this includes the identity element~$0$ of~$\Gamma$, written as the empty sum.
Then
\begin{itemize}
\item{}
Each member~$\gamma$ of~$K'$ has a representation~\eqref{eq:trimmed}
with~$\varepsilon_{\gamma} = 1$
and with~$\varepsilon_{\gamma'} = 0$ otherwise.
\item{}
$c(\gamma) = 1/2$ if
there are no other
representations of~$\gamma$.
\item{}
$c(\gamma) > 1/2$ if there are other representations of~$\gamma$.
\end{itemize}
Similarly,~$c(0) \ge 1$.

Now assume that~$K$ is strongly lacunary. Then
\begin{itemize}
\item{}
$\Riesz(K) \subset P\cup(-P')$.
\item{}
The only representation~(\ref{eq:trimmed})
of~$0$
is the empty sum.
\end{itemize}
Hence~$c(0) = 1$.
Since~$\widehat{R_K} = c$, it
vanishes off~$P\cup(-P)$,
while
\begin{equation}\label{eq:RieszHat}
\widehat{R_K}(0) = 1,
\quad\text{and}\quad
\widehat{R_K}(\gamma) \ge \frac{1}{2}
\quad\text{when}\quad
\gamma \in K'.
\end{equation}
Since~$R_K \ge 0$,
\begin{equation}\label{eq:RieszNorm}
\|R_K\|_1 = \widehat{R_K}(0) = 1.
\end{equation}

Let~$f_K = \mu*R_K$.
Then~$\widehat{f_K} = \hat\mu\widehat{R_K}$, which
vanishes
on~$-P'$
because~$\hat\mu$ does.
Also,
\[
\left|\widehat{f_K}(\gamma)\right|
= \left|\hat\mu(\gamma)\widehat{R_K}(\gamma)\right|
\ge \frac{1}{2}\left|\hat \mu(\gamma)\right|
\quad\text{for all~$\gamma$ in~$K$.}
\]
Applying
Theorem~\ref{th:partialPaley}
to~$f_K$
yields
that
\begin{equation}
\label{eq:PaleyForMeasures}
\|\hat \mu|K\|_2
\le 2\left\|\widehat{f_K}|K\right\|_2
\le 4\|f_K\|_1
\le 4\left\|R_K\right\|_1\|\mu\|
=
4\|\mu\|.
\end{equation}

\label{rm:FromTotalOrder}
In many cases,
the partial order on~$\Gamma$
extends
to a total order. That is, the cone~$P$ imbeds in a
cone~$\tilde P$
which
satisfies both conditions in line~(\ref{eq:total}).
Then the set~$K$ is
strongly lacunary relative
to~$\tilde P$.

As noted above,~$\widehat{f_K}$
vanishes on~$-P'$.
Because of its factor~$\widehat{R_K}$,
it also vanishes off~$\Riesz(K)$,
and hence off~$P\cup(-P')$.
So~$\widehat{f_K}$
vanishes off~$P$,
and hence
off the larger
set~$\tilde P$.

Theorem~\ref{th:partialPaley} is already known
for the total order given by~$\tilde P$, and
yields that
$
\left\|\widehat{f_K}\vert K\right\|_2 \le C\|f_K\|_1.
$
It follows as above that
\begin{equation}
\label{eq:ForMeasures}
\|\hat \mu\vert K\|_2
\le 2C\|\mu\|.
\end{equation}

\begin{remark}

In the same cases, the version of Theorem~\ref{th:complementary} for partial orders follows 
as above
from the instance of it for total orders,
which has other proofs.

\end{remark}

\begin{remark}
\label{rm:notKnow}
We do not know how to use the method
above
to prove
Theorem~\ref{th:S(K)} below,
but
it will allow us
to
then
deduce
Theorem~\ref{th:S_1(K)}.
\end{remark}

\begin{remark}
\label{rm:To trig polys}
Replacing the Riesz product~$R_K$
above
with
a suitable sequence of
the trigonometric polynomials
discussed
in~\cite{BPR}
gives
the
part~$\|\hat \mu\vert K\|_2
\le 4\|\mu\|$
of
inequality~\eqref{eq:PaleyForMeasures}
with the constant~$4$ replaced by~$2$.
The
use of finite Riesz products to pass from more general objects to
trigonometric
polynomials goes back at least as far as~\cite[pp.133--134]{Boch},
and also occurs in~\cite{Figa-T}.
\end{remark}

\section{Analysing our method}
\label{sec:fewer}

Theorems~\ref{th:Paley}
and~\ref{th:complementary}
both state
that if~$\hat f$ vanishes on a suitable
part of the complement of
a strongly lacunary set~$K$,
then
\begin{equation}
\label{eq:commonestimate}
\|\hat f\vert K\|_2 \le C\|f\|_1.
\end{equation}
In~\cite[Remark~3]{FouJMAnAp},
an
examination of
the proof in Remark~\ref{rm:ComplementaryProof}
of Theorem~\ref{th:complementary}
revealed that inequality~(\ref{eq:commonestimate})
follows, with~$C=2$,
if~$\hat f(n) = 0$
whenever~$n$ is equal to an alternating sum
\[
k_{j_1} - k_{j_2} + \cdots +
k_{j_{2i-1}} - k_{j_{2i}} + k_{j_{2i+1}},
\]
with at least~$3$ terms and with a strictly increasing index sequence~$(j_\ell)$.
There is no requirement here that~$K$ be strongly lacunary,
or that it be
enumerated
monotonically.

We
examine our
new
proof of Theorem~\ref{th:Paley}
with
a similar
goal.
Given a subset~$D$
of the integer group~$\Z$,
let~$V(D)$ denote the closed subspace of~$L^2(\T)$ spanned by the products~$z^nh$ for which~$n \in D$.
The subspaces~$L_j$
used to prove Paley's theorem
had the form~$V(D_j)$
where~$
D_j = \left\{n: n < -k_j\right\}
$.

For any choice of sets~$D_j$, let~$L_j= V(D_j)$.
Then
\[
A_{j}L_{j} = V(D_{j} + k_{j}),
\quad\text{and}\quad
A_{j+1}L_j = V(D_j + k_{j+1}),
\]
where~$j < J$
in the latter case.
We
required that~$g$
be orthogonal
to the subspace~$A_{j}L_j$
for all~$j > 1$.
By formula~(\ref{eq:coefficients}), this happens if only if
\begin{equation}
\label{eq:hypothesis}
\hat f(n) = 0
\quad\text{
for all
integers~$n$
in the set}\quad
\bigcup_{j=2}^{J} (D_j + k_{j}).
\end{equation}
%
%
Our
proof
uses three
properties
of the subspaces~$L_j$ and their images.
\begin{enumerate}
\item
$A_jh \in A_{j+1}L_j$
when~$j < J$.
\label{it:Membership}
\item
$L_j \supset L_{j+1}$
when~$j < J$.
\item
$A_{j}L_{j-1}
\subset A_{j+1}L_j$
when~$1 < j < J$.
\end{enumerate}
The membership condition~(\ref{it:Membership}) holds if
\begin{equation}
\label{eq:kjInShiftedDj}
k_j \in
D_j
+ k_{j+1}
\end{equation}
when~$j < J$.
The subspaces~$L_j$ and their images~$A_{j+1}L_j$ nest suitably if
\begin{gather}
\label{eq:antinest}
D_j \supset D_{j+1},
\\
\label{eq:imagenest}
\text{and}\quad
D_{j-1}
+ k_j
\subset
D_j + k_{j+1},
\end{gather}
where~$j < J$ in both cases, and~$j > 1$ in the second case.

Extend the finite sequence~$(k_j)_{j=1}^J$ to a
doubly-infinite
sequence,
in the integers or some larger abelian group,
with no monotonicity
or disjointness
requirement,
and seek sets~$D_j$ satisfying the 
three
conditions above for
all values of~$j$.
The lack of
special conditions at
endpoints 
for~$j$
makes it easier to find a pattern that
works.

Form the sets~$G_{j+1} = k_{j+1} + 
D_j$.
Making
them
minimal will do the same for the sets~$D_j$.
The three conditions on
the
latter
hold for all~$j$ if and only if
\begin{gather}
\label{eq:InGj}
k_j \in G_{j+1},\\
\label{eq:ShiftedGj'sAntinest}
G_{j+1} - k_{j+1} \subset G_j - k_j,
\\
\label{eq:Gj'sNest}
\text{and}\quad
G_j \subset G_{j+1}
\end{gather}
for all~$j$.
Rewrite
the second
condition
above
as
\begin{equation}
\label{eq:RewriteShiftedGj'sAntinest}
G_{j+1} - \Delta k_j \subset G_j,
\end{equation}
where~$\Delta k_j = k_{j+1} - k_j$.
Since~$G_j \subset G_{j+1}$, it follows that
\begin{align}
G_{j+1} - 2\Delta k_j
&= (G_{j+1} -\Delta k_j) - \Delta k_j
\notag\\
&\subset G_j - \Delta k_j
\subset G_{j+1} - \Delta k_j
\subset G_j
\notag
\end{align}
Let~$i$
and~$i'$
be
integers
for which~$i < i'$,
and
let~$(m_{j'})_{j' = i}^{i'}$
be
a sequence of
strictly
positive integers.
Iterate
the reasoning above to
get that
\[
G_{i'+1} - \sum_{j'=i}^{i'} m_{j'}\Delta k_{j'}
\subset G_i.
\]
Combine this with condition~\eqref{eq:InGj} to get that
\[
k_{i'} - \sum_{j'=i}^{i'} m_{j'}\Delta k_{j'}
\in G_i.
\]
Since~$k_{i'} - \sum_{j=i}^{i'-1} \Delta k_j = k_i$,
the expression on the left above
is equal to
\begin{equation}
\label{eq:Shiftki}
k_{i} - \sum_{j'=i}^{i'-1} (m_{j'}-1)\Delta k_{j'} - m_{i'}\Delta k_{i'}
=
k_{i} - \sum_{j'=i}^{i'} n_{j'}\Delta k_{j'}
\end{equation}
say,
where~$n_{j'} \ge 0$ for all~$j'$
and~$n_{i'} > 0$.

Condition~\eqref{eq:Gj'sNest}
forces~$G_{j+1}$
to contain
combinations
of the 
form~\eqref{eq:Shiftki}
when~$i \le j+1$.
By
conditions~\eqref{eq:InGj} and~\eqref{eq:Gj'sNest},
it
must also
contain~$k_i$
when~$i \le j$.
%
So~$G_{j+1}$
must contain all
combinations~$k_i - s_i$
in which
\begin{enumerate}
\item{}
$i \le j+1$.
\item{}
$s_i$ is a sum of
finitely many
copies of~$\Delta k_{j'}$ 
in
which~$j' \ge i$.
\item{}
Repetitions are allowed in the sum~$s_i$.
\item{}
That sum
is
nonempty if~$i = j+1$.
\end{enumerate}


Let each
set~$G_{j+1}$
contain nothing else.
Then it is obvious that conditions~\eqref{eq:InGj} and \eqref{eq:Gj'sNest} hold.
For
the remaining
condition~\eqref{eq:RewriteShiftedGj'sAntinest},
suppose that the four
statements
listed
above
hold for~$k_i - s_i$.
In the cases where~$i \le j$,
\[
(k_i - s_i) - \Delta k_{j}
= k_i - (s_i + \Delta k_{j}),
\]
which belongs 
to~$G_i$, and hence
to~$G_j$.
When~$i = j+1$
instead,
\begin{gather}
(k_i - s_i) - \Delta k_{j}
= (k_{j+1} - s_{j+1}) - \Delta k_{j}
\notag
\\
= (k_{j+1} - \Delta k_j) - s_{j+1}
= k_j - s_{j+1},
\notag
\end{gather}
which also belongs to~$G_j$,
since the sum~$s_{j+1}$ is nonempty.

The conclusion that~$\left[\sum_j |\hat f(k_j)|^2\right]^{1/2} \le 2\|f\|_1$ follows if~$\hat f$ vanishes on
all
the 
sets~$D_j + k_j$.
They
coincide with
the difference sets~$G_{j+1} - \Delta k_j$ considered above.
There,
expressions
of the
form~$k_j - s'_j$
arose in two ways,
as~$k_j - (s_j + \Delta k_j)$,
and as~$(k_{j+1} - \Delta k_j) - s_{j+1}$.
In both cases, the
sum~$s'_j$
is
nonempty.
All
nonempty
sums~$s'_j$
of differences~$\Delta k_{j'}$
in which~$j' \ge j$
arise in
these ways.

Call such a
combination~$k_j - s'_j$
a
top member
of the set~$G_{j+1} - \Delta k_j$.
The other members of that set have the form~$k_{j'} - s'_{j'}$ where~$j' < j$
and~$s'_{j'}$ contains a copy
of~$\Delta k_{j'+1}$.
Then~$k_{j'} - s'_{j'}$ is a top member of~$G_{j'+1} - \Delta k_{j'}$.



Denote the union of the
sets~$G_{j+1} - \Delta k_j$,
or their subsets of top members,
by~$\Schur((k_j))$.
It comprises all
combinations~$k_j - s'_j$ as above
where
the sum~$s'_j$ is nonempty.
Rewrite~$k_j - s'_j$
as
\begin{equation}
\label{eq:epsilons}
\sum_{j' } \varepsilon_{j'} k_{j'},
\end{equation}
where the coefficients~$\varepsilon_{j'}$ are integers, and only finitely-many of them differ from~$0$.
Such sums belong to~$\Schur((k_j))$
if and only
if
these coefficients
satisfy the following conditions, which arose in
the dual method in~\cite{FouArk},
and also
arise
in
the
one
used
in~\cite{PS, Smith, Yudin}.
\begin{itemize}
\item{}
\label{it:fullsum}
The full
sum~$\sum_{j'} \varepsilon_{j'}$
is equal to~$1$.
\item{}
All partial sums of
that
full sum are nonnegative.
\item{}
All partial sums after the first positive one are positive.
\item{}
Some partial sum is greater than~$1$.
\end{itemize}
%
%

Specify~$G_{j+1}$ and~$\Schur((k_j))$
in the same way for enumerations
of the form~$(k_j)_{j=I}^\infty$,
where~$I$
is finite, except 
for
requiring
that~$j \ge I$.
Given an enumeration
of form~$(k_j)_{j=-\infty}^J$
or~$(k_j)_{j=I}^J$
where~$J$ is finite,
specify~$G_{j+1}$ as above
when~$j < J$,
and
let~$\Schur\left((k_j)\right)$ be
the union of the sets~$G_{j+1} - \Delta k_j$
for these values of~$j$.
In all cases, this 
union
is
still
the set of
sums~(\ref{eq:epsilons})
with the four properties listed above.

Conditions~\eqref{eq:InGj},
\eqref{eq:Gj'sNest} 
and~\eqref{eq:RewriteShiftedGj'sAntinest}
hold for the same reasons as
before.
%
Let~$D_j$
be~$G_{j+1} - k_{j+1}$
when this
difference
set is defined.
Conditions~(\ref{eq:kjInShiftedDj}), 
(\ref{eq:antinest}) and~(\ref{eq:imagenest})
then
hold
except when~$j = J-1$
and~$J$ is the largest index in the enumeration.
These
cases are not required
in
putting~$L_j  = V(D_j)$
and
applying
the
method in
our
proof of
Theorem~\ref{th:Paley}.
Doing that
yields the
following.

\begin{theorem}\label{th:S(K)}
Let~$K$ be a subset of the group~$\Z$,
and let~$f \in L^1(\T)$.
If~$\hat f$ vanishes on~$\Schur((k_j))$
for some enumeration~$(k_j)$ of~$K$,
then
\begin{equation}\label{eq:S((k_j))}
\|\hat f|K\|_2 \le 2\|f\|_1.
\end{equation}
\end{theorem}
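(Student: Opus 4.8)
The plan is to feed the hypothesis directly into the machinery assembled above, so that the proof of Theorem~\ref{th:Paley} runs essentially word for word once the subspaces~$L_j$ are replaced by the ones constructed in this section. I would first reduce to finite~$K$. Since~$\Schur((k_j))$ for an infinite sequence was defined as the union of the sets~$\Schur((k'_j))$ over finite subsequences, the hypothesis forces~$\hat f$ to vanish on each~$\Schur((k'_j))$. Granting the finite case, this gives~$\|\hat f|K'\|_2 \le 2\|f\|_1$ for every finite subset~$K'$ of~$K$, and letting~$K'$ exhaust~$K$ yields~\eqref{eq:S((k_j))}. Note that no use is made of increasing order or of strong lacunarity: the construction below depends only on the given enumeration.

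For finite~$K$ enumerated as~$(k_j)_{j=1}^J$, I would factor~$f = g\overline h$ measurably with~$|g| = |h|$, exactly as in the proof of Theorem~\ref{th:Paley}, and set~$L_j = V(D_j)$ with the sets~$D_j$ produced above: take~$D_J$ empty and~$D_j = G_{j+1} - k_{j+1}$ for~$j < J$, where~$G_{j+1}$ is the explicit set of integers of the form~\eqref{eq:DeltaXj's}. The multiplication operators~$A_j$ and the nested projections~$P_j$ and~$Q_j$ are then defined verbatim.

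The substance of the proof is the verification that these subspaces carry the four features used for Paley's theorem, and all but one were already checked above. The membership~$A_jh \in A_{j+1}L_j$ is condition~\eqref{eq:InCj}; the antinesting~$L_j \supset L_{j+1}$ is condition~\eqref{eq:ShiftedCj'sAntinest}; and the increasing nesting of the images~$A_{j+1}L_j$ is condition~\eqref{eq:Cj'sNest}. The fourth feature is orthogonality. Tracking the proof of Theorem~\ref{th:Paley} shows that~$g \perp A_jL_j$ is needed only for~$j > 1$, which by~\eqref{eq:coefficients} means that~$\hat f$ vanishes on~$\bigcup_{j=2}^J (k_j + D_j)$. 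This union sits inside~$\Schur((k_j)) = \bigcup_{j=1}^J(k_j + D_j)$, so the hypothesis supplies exactly what is wanted.

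With the four features in hand, I would transcribe the remainder of the proof of Theorem~\ref{th:Paley}: split each coefficient as in~\eqref{eq:TwoStep}, bound the~$a_j$ by~\eqref{eq:outer}, and bound the~$b_j$ by the telescoping identity~$b_j = (g, A_j(P_{j-1} - P_j)h)$, giving~$\|(a_j)\|_2$ and~$\|(b_j)\|_2$ each at most~$\|g\|_2\|h\|_2$. Since~$\|g\|_2\|h\|_2 = \|f\|_1$, the triangle inequality yields~\eqref{eq:S((k_j))} with the constant~$2$. The real obstacle has already been cleared ahead of the statement, namely the combinatorial design of a single family~$G_{j+1}$ that simultaneously satisfies~\eqref{eq:InCj},~\eqref{eq:Cj'sNest}, and~\eqref{eq:ShiftedCj'sAntinest} while keeping its defining orthogonality set no larger than~$\Schur((k_j))$; once that bookkeeping is granted, the functional-analytic core is identical to the earlier argument.
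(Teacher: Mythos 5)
Your proposal is correct and takes essentially the same route as the paper: the whole of Section~\ref{sec:fewer} is the paper's proof of Theorem~\ref{th:S(K)}, namely choosing the sets~$G_{j+1}$ (equivalently~$D_j$, with~$D_J$ empty) so that conditions~\eqref{eq:InCj},~\eqref{eq:Cj'sNest} and~\eqref{eq:ShiftedCj'sAntinest} hold, observing that the hypothesis that~$\hat f$ vanish on~$\Schur((k_j)) = \bigcup_j (k_j + D_j)$ supplies the orthogonality~$g \perp A_jL_j$, and then running the two-step projection argument of Section~\ref{sec:two-step} verbatim to get the constant~$2$. Your reduction to finite~$K$ via the definition of~$\Schur((k_j))$ as a union over finite subsequences, and the identity~$\|g\|_2\|h\|_2 = \|f\|_1$, fill in the same routine steps the paper leaves implicit.
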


Again, there is no requirement that~$K$ be strongly
lacunary,
or that it be enumerated in increasing order.
In
many
cases,~$\Schur((k_j))$ overlaps with~$K$, and the  hypothesis in the theorem then forces~$\hat f$ to vanish on that overlap.
When~$K$ is strongly lacunary
and enumerated in increasing order,
however,
no such overlap can occur,
because~$\Schur((k_j))$ is then included in the
strictly negative cone.
In
most
cases, that inclusion is strict,
and Theorem~\ref{th:S(K)} sharpens
Theorem~\ref{th:Paley}.

As in
Remark~\ref{rm:To trig polys},
Theorem~\ref{th:S(K)}
extends,
with the same constant~$2$,
to Fourier coefficients of measures.
One
can
also
replace~$\Schur((k_j))$ by a significantly smaller set,
at the cost of using a larger constant
in inequality~(\ref{eq:S((k_j))}).
Let~$S((k_j))$ consist of all integers~$m$ with
at least one representation~\eqref{eq:epsilons}
in which the coefficients~$\varepsilon_{j'}$ belong to the set~$\{-1,
0, 
1\}$
and satisfy the four conditions for membership of~$m$ in~$\Schur((k_j))$.

Consider the corresponding
notion
on
abelian groups.
Recall the definition of the set~$\Riesz(K)$ in Section~\ref{sec:totalize}.
Clearly,
\[
S((\gamma_j))
=
S((\gamma_j))
\cap \Riesz(K).
\]
The
following statement
is proved
in the next section.

\begin{theorem}\label{th:S_1(K)}
Let~$K$ be a subset of
a discrete abelian group with dual~$G$,
and
let~$\mu$ be a regular Borel measure
on~$G$.
If~$\hat \mu$ vanishes
on~$S((\gamma_j))$
for some enumeration~$(\gamma_j)$ of~$K$,
then
\begin{equation}\label{eq:S_1(K)}
\|\hat \mu|K\|_2 \le 4\|\mu\|.
\end{equation}
\end{theorem}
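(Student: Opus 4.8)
The plan is to deduce Theorem~\ref{th:S_1(K)} from the function-level Theorem~\ref{th:S(K)} by the Riesz-product device of Section~\ref{sec:totalize}, the factor-two loss inherent in that device accounting for the passage from the constant~$2$ there to the constant~$4$ here. As usual it suffices to treat a finite set~$K$, enumerated as~$(\gamma_j)_{j=1}^J$: the right-hand side of~\eqref{eq:S_1(K)} is uniform in~$K$, and~$\Schur((\gamma_j))$ for an infinite sequence is by definition the union of its finite-subsequence analogues, so the hypothesis passes to each finite subsequence.

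First I would promote Theorem~\ref{th:S(K)}, stated over~$\T$, to an arbitrary compact abelian group~$G$ with discrete dual~$\Gamma$. The nested-projection proof of Section~\ref{sec:two-step} runs over any such~$\Gamma$ --- this is exactly Theorem~\ref{th:partialPaley} --- and the analysis of Section~\ref{sec:fewer} that isolates the set~$\Schur((k_j))$ uses only the group law and the chosen enumeration: the subspaces~$V(D)$, the shift operators~$A_j$, the telescoping identity for the differences~$\Delta k_j$, and the membership and nesting conditions~\eqref{eq:kjInShiftedDj}, \eqref{eq:antinest}, \eqref{eq:imagenest} are all order-free. Hence for every~$f \in L^1(G)$ whose transform vanishes on~$\Schur((\gamma_j))$ one has~$\|\hat f|K\|_2 \le 2\|f\|_1$.

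Next I would copy the construction of Section~\ref{sec:totalize} verbatim: set~$f_K = \mu * R_K$, so that~$\|f_K\|_1 \le \|R_K\|_1\|\mu\| = \|\mu\|$ by~\eqref{eq:RieszNorm}; the transform~$\widehat{f_K} = \hat\mu\,\widehat{R_K}$ is supported in~$\Riesz(K)$; and~$|\widehat{f_K}(\gamma)| \ge \frac{1}{2}|\hat\mu(\gamma)|$ for every~$\gamma \in K$ by~\eqref{eq:RieszHat}. To feed~$f_K$ into the group form of Theorem~\ref{th:S(K)} I need~$\widehat{f_K}$ to vanish on~$\Schur((\gamma_j))$; and since~$\widehat{f_K}$ is carried by~$\Riesz(K)$, it is enough that~$\hat\mu$ vanish on~$\Schur((\gamma_j)) \cap \Riesz(K)$. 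Granting this, the extended Theorem~\ref{th:S(K)} gives~$\|\widehat{f_K}|K\|_2 \le 2\|f_K\|_1 \le 2\|\mu\|$, and the lower bound~$|\widehat{f_K}| \ge \frac{1}{2}|\hat\mu|$ on~$K$ then yields inequality~\eqref{eq:S_1(K)} with constant~$4$.

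The main obstacle is exactly this requirement that~$\hat\mu$ vanish on~$\Schur((\gamma_j)) \cap \Riesz(K)$. Read literally, the hypothesis gives vanishing on all of~$\Schur((\gamma_j))$, and the requirement is then immediate. The substance of the theorem, however, is to reach the same conclusion from the weaker assumption that~$\hat\mu$ vanish only on the much smaller set~$S((\gamma_j))$; for that one must upgrade the easy inclusion~\eqref{eq:S in SchurRiesz} to the equality~$\Schur((\gamma_j)) \cap \Riesz(K) = S((\gamma_j))$. A character of~$\Riesz(K)$ comes equipped with a representation~\eqref{eq:epsilons} whose coefficients lie in~$\{-1,0,1\}$, whereas membership in~$\Schur((\gamma_j))$ only supplies a possibly different representation satisfying the four partial-sum conditions; the task is to manufacture a single representation that is simultaneously~$\{-1,0,1\}$-valued and admissible. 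Reconciling the two representations --- particularly when~$K$ is not lacunary, so that~$\Gamma$ may carry relations among the~$\gamma_j$ --- is where I expect the genuine work to lie.
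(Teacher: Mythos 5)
Your reduction to finite~$K$ and your first step (the nested-projection proof of Theorem~\ref{th:S(K)} is order-free and runs over any compact abelian group) are fine, and indeed the paper asserts exactly that inside the proof of its Lemma~\ref{th:measures}. The gap is in your third paragraph: you invoke~\eqref{eq:RieszNorm} to get $\|f_K\|_1 \le \|R_K\|_1\|\mu\| = \|\mu\|$, but~\eqref{eq:RieszNorm} is derived in Section~\ref{sec:totalize} under the standing assumption that~$K$ is strongly lacunary relative to a partial order. What it really needs is $c(0)=1$, i.e.\ that the only representation of~$0$ in the form~\eqref{eq:trimmed} is the empty sum. Theorem~\ref{th:S_1(K)} carries no lacunarity or order hypothesis, and then $\|R_K\|_1 = \widehat{R_K}(0) = c(0)$ can be arbitrarily large: for $K = \{1,2,\dots,J\}\subset\Z$, each relation $i + j - (i+j) = 0$ contributes $1/8$ to~$c(0)$, so $\|R_K\|_1$ grows like~$J^2$. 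Thus the crucial bound $\|f_K\|_1 \le \|\mu\|$ is unavailable, and your argument collapses precisely at the step that converts the measure into an $L^1$ function. (The vanishing hypothesis on~$\hat\mu$ gives no control here, since the norm of~$R_K$ has nothing to do with~$\mu$.)

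The paper's proof supplies the missing idea: it manufactures lacunarity rather than assuming it. Form $G\times\T^J$ with dual $\Gamma\times\Z^J$, replace each~$\gamma_j$ by $(\gamma_j,\vec k^{(j)})$ where $\vec k^{(j)}$ is the $j$-th standard basis vector, and order $\Gamma\times\Z^J$ through a total order on~$\Z^J$; the lifted set~$\tilde K$ is then \emph{extremely} lacunary, so Lemma~\ref{th:measures} --- which is exactly your Riesz-product argument, valid in the lacunary partially ordered setting --- applies to the transferred measure~$\tilde\mu$, and $\|\hat\mu|K\|_2 = \|\hat{\tilde\mu}|\tilde K\|_2 \le 4\|\tilde\mu\| = 4\|\mu\|$. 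Note also that your closing paragraph aims at the wrong target: upgrading the inclusion~\eqref{eq:S in SchurRiesz} to the equality $S((\gamma_j)) = \Schur((\gamma_j))\cap\Riesz(K)$ in the original group is impossible in general --- the paper states that this inclusion is often strict --- and the equality~\eqref{eq:SimpleS(K)} is recovered only after the extreme-lacunarity lift. For the theorem as literally stated (vanishing on all of $\Schur((\gamma_j))$) that issue is indeed moot, as you observe, but the $\|R_K\|_1$ obstruction remains and is fatal without the product-group construction.
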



\begin{remark}\label{rm:supplproof}
The two theorems
above were
proved
in the late~$1970$'s in~\cite{FouArk} via a dual construction using the Schur algorithm. That method yielded
inequalities~(\ref{eq:S((k_j))}) and~(\ref{eq:S_1(K)})
with the smaller constants~$\sqrt2$ and~$2\sqrt2$.
The utility of the methods
used in the present paper was
understood by the early~$1970$'s, however, so that the application in~\cite{FouArk} to
``half'' of
the Littlewood conjecture for exponential sums
could have been obtained
somewhat
earlier.
\end{remark}


\begin{remark}
The
dual construction
in~\cite{PS} and~\cite{Smith}
can also be used to prove Theorem~\ref{th:S(K)}, with constant~$2$.
\end{remark}

\begin{remark}
In
the case where
the sequence~$(k_j)$ is doubly
infinite, the
sets~$\Schur((k_j))$,~$G_{j+1}$ and~$D_j$
can also be described using suitable
partial orders or
\emph{preorders}
that are compatible with addition.
For each
index~$j$,
let~$P_j$
be the semigroup generated by the 
differences~$\Delta k_{i}$
in which~$i \ge j$.
Write~$m <_j n$
when~$n - m \in P_j$,
with no requirement
that~$0 \notin P_j$.
Then
\begin{enumerate}
\item{}
\label{G's preorder}
$m \in \Schur((k_j))$ if and only if~$m <_j k_j$ for some~$j$.
\item{}
$m \in G_{j+1}$ if and only if~$m <_{j+1} k_{j+1}$ or~$m \le_i k_i$ for some~$i \le j$.
\item{}
$m \in D_j$ if and only if~$m <_{j+1} 0$ or~$m \le_i k_i - k_{j+1}$ for some~$i \le j$.
\end{enumerate}
\end{remark}

\begin{remark}
\label{rm:RepresentDj}
In the second
part of
the description of~$D_j$ just above, write~$k_{i} - k_{j+1}$ as~$-\sum_{j' = i}^j \Delta k_{j'}$.
It follows that the members of~$D_j$ are 
the combinations~$-\sum_{j'} n_{j'}\Delta k_{j'}$
with integer coefficients~$n_{j'}$ 
having the following properties.
\begin{itemize}
\item{}
$n_{j'} \ge 0$ for all~$j'$.
\item{}
$n_{j'} > 0$ for some~$j'$.
\item{}
The set of indices~$j' < j$
for which~$n_{j'} \ne 0$ has no
gaps,
and contains~$j-1$
unless that set is empty.
\end{itemize}
The antinesting
property~(\ref{eq:antinest})
of the sets~$D_j$
is
then 
easy to check.
\end{remark}

\begin{remark}
\label{rm:PreorderPaley}
So is the fact that
each set~$D_j$
is an additive semigroup.
Define preorders by
saying that~$m < _j^* n$ when~$m - n \in D_j$.
Rewrite
conditions~\eqref{eq:kjInShiftedDj} to~\eqref{eq:imagenest}
as follows.
\begin{description}
\item[\textnormal{Membership}]
$k_j <_{j}^* k_{j+1}$.
\item[\textnormal{Antinesting}]
If~$m < _{j+1}^* n$,
then~$m < _{j}^* n$.
\item[\textnormal{Nesting}]
If~$m < _{j-1}^* k_{j}$,
then~$m < _{j}^* k_{j+1}$.
\end{description}
The hypothesis in Theorem~\ref{th:S(K)} is that~$\hat f$ vanishes on the union of the sets~$D_j + k_j$, that
is~$\hat f(m) = 0$
whenever there is some index~$j$ for
which~$m <_j^* k_j$.
\end{remark}

\section
{Direct
Proof of Theorem~\ref{th:S_1(K)}}
\label{sec:completing}


We work initially with stronger hypotheses.
\begin{lemma}\label{th:measures}
Let~$K$ be a
strongly lacunary
set
in
a
partially ordered
discrete abelian group~$\Gamma$,
and let~$\mu$ be a regular Borel measure
on the dual of~$\Gamma$.
Enumerate~$K$ in increasing order
as~$(\gamma_j)$.
If~$\hat \mu$ vanishes on~$\Schur((\gamma_j))\cap\Riesz(K)$, then
\begin{equation}\label{eq:MeasuresS_1(K)}
\|\hat u|K\|_2 \le 4\|\mu\|.
\end{equation}
\end{lemma}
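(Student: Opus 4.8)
The plan is to run the finite Riesz--product argument of Section~\ref{sec:totalize} almost verbatim, but to feed it into the refinement of Theorem~\ref{th:S(K)} (in its group form) rather than into Theorem~\ref{th:partialPaley}. As always I would first reduce to finite~$K$: both sides of~\eqref{eq:MeasuresS_1(K)} are controlled by finite subsets of~$K$, the constant~$4$ is uniform, and the vanishing hypothesis passes to any finite subset with its induced increasing enumeration, since both~$\Schur((\gamma_j))$ and~$\Riesz(K)$ only shrink under passage to a subset.

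With~$K$ finite, I would form the Riesz product~$R_K$ exactly as in Section~\ref{sec:totalize}. Strong lacunarity forces~$c(0)=1$, hence~$\|R_K\|_1 = \widehat{R_K}(0) = 1$, while~$\widehat{R_K}(\gamma) \ge \tfrac12$ for every~$\gamma \in K$, and by construction~$\widehat{R_K}$ is supported on~$\Riesz(K)$. Setting~$f_K = \mu * R_K$ gives~$\widehat{f_K} = \hat\mu\,\widehat{R_K}$, so that~$\widehat{f_K}$ is again supported on~$\Riesz(K)$, obeys~$|\widehat{f_K}(\gamma)| \ge \tfrac12|\hat\mu(\gamma)|$ for~$\gamma \in K$, and~$\|f_K\|_1 \le \|R_K\|_1\|\mu\| = \|\mu\|$.

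The one place where the weaker hypothesis of this lemma enters is the following observation: because~$\widehat{f_K}$ is supported on~$\Riesz(K)$ and~$\hat\mu$ vanishes on~$\Schur((\gamma_j))\cap\Riesz(K)$, the product~$\widehat{f_K}$ in fact vanishes on all of~$\Schur((\gamma_j))$. I would then apply the group version of Theorem~\ref{th:S(K)} to~$f_K$ to get~$\|\widehat{f_K}|K\|_2 \le 2\|f_K\|_1$, and chain the estimates as in~\eqref{eq:PaleyForMeasures}:
\[
\|\hat\mu|K\|_2 \le 2\|\widehat{f_K}|K\|_2 \le 4\|f_K\|_1 \le 4\|R_K\|_1\|\mu\| = 4\|\mu\|,
\]
which is~\eqref{eq:MeasuresS_1(K)}.

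The main obstacle is not this chain of estimates but justifying the group form of Theorem~\ref{th:S(K)} that it uses, since that theorem is stated only for~$\Z$ and~$\T$ while here~$\Gamma$ is an arbitrary partially ordered discrete group and~$f_K \in L^1(G)$. I would handle this by checking that the whole analysis of Section~\ref{sec:fewer} --- the sets~$D_j$,~$G_{j+1}$, and~$\Schur((\gamma_j))$, together with the three nesting and membership conditions --- carries over word for word once multiplication by~$z^{k_j}$ is replaced by multiplication by the character~$\gamma_j$, exactly as Theorem~\ref{th:Paley} was promoted to Theorem~\ref{th:partialPaley}. With the subspaces~$L_j = V(D_j)$ and their images behaving as before, the factor~$g$ of~$f_K$ is orthogonal to every~$A_jL_j$ precisely because~$\widehat{f_K}$ vanishes on~$\bigcup_j(D_j+\gamma_j) = \Schur((\gamma_j))$, and the two--step splitting of Section~\ref{sec:two-step} delivers the constant~$2$.
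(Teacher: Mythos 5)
Your proof is correct and is essentially the paper's own argument: the paper likewise reduces to finite~$K$, invokes the group form of Theorem~\ref{th:S(K)} for functions in~$L^1(G)$ whose coefficients vanish on~$\Schur((\gamma_j))$, and then feeds in the Riesz-product method of Section~\ref{sec:totalize} to pass from the measure~$\mu$ to~$f_K = \mu * R_K$. The step you spell out explicitly --- that~$\widehat{f_K} = \hat\mu\,\widehat{R_K}$ vanishes on all of~$\Schur((\gamma_j))$ because~$\widehat{R_K}$ is supported on~$\Riesz(K)$ while~$\hat\mu$ vanishes on~$\Schur((\gamma_j))\cap\Riesz(K)$ --- is exactly the point the paper's terse proof leaves implicit.
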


\begin{proof}
Denote the group dual to~$\Gamma$ by~$G$.
The proof of Theorem~\ref{th:S(K)}
applies to functions in~$L^1(G)$
whose coefficients vanish on~$\Schur((\gamma_j))$.
The
methods
in  Section~\ref{sec:totalize}
then
yield
inequality~(\ref{eq:MeasuresS_1(K)})
when~$\hat \mu$
vanishes
on~$\Schur((\gamma_j))\cap\Riesz(K)$.
\end{proof}

\begin{proof}[Proof of Theorem~\ref{th:S_1(K)}]
Drop
the requirement
that  the finite set~$K$ be strongly 
lacunary.
%
Form the product group~$G \times \T$
and its dual~$\Gamma \times \Z$.
Define a partial order on that dual group
by declaring that
\[
(\gamma', n') < (\gamma, n)
\quad\text{when~$n' < n$.}
 \]
The set~$\tilde K$ of pairs~$(\gamma_j, 3^j)$
is strongly lacunary relative
to this partial order.
Note that
if~$(\gamma, 
n) \in S(\tilde K)$, then~$\gamma \in S(K)$.

Identify~$\T$ with the interval~$(-\pi, \pi]$
with addition modulo~$2\pi$.
Identify~$G$
with the subgroup~$G \times \{0\}$
of~$G \times \T$.
Given a measure~$\mu$
on~$G$ form a measure~$\tilde \mu$
on~$G \times \T$ by first transferring~$\mu$
to~$G \times\{0\}$,
and then extending it to vanish outside
that subgroup of~$G\times\T$. Note that~$\|\tilde \mu\| = \|\mu\|$, and that
\[
\hat{\tilde \mu}(\gamma,
n)
= \hat \mu(\gamma)
\]
in all cases.

Suppose that~$\hat \mu$ vanishes on~$S(K)$.
Then~$\hat{\tilde \mu}$ vanishes
on~$S(\tilde K)$. Since~$\tilde K$
is strongly lacunary,
Lemma~\ref{th:measures}
applies to~$\tilde \mu$, and yields
that
\[
\|\hat \mu|K\|_2 = \|\hat{\tilde \mu}|\tilde K\|_2
\le 4\|\tilde \mu\| = 4\|\mu\|.\qedhere
\]
\end{proof}

\begin{remark}
\label{rm:Drury}

The idea of adding
one dimension to remove some unwanted frequencies goes back at least as far as~\cite{Dru}.
\end{remark}


\begin{appendix}
\begin{section}{
Other nestings
}
\label{sec:compare}

For the classical Paley theorem, the authors of~\cite{KP}
used analytic
factorization and
projections into finite-dimensional subspaces.
A version of
their argument,
without
analytic factors,
runs as follows.

Factor~$f$ as
before,
and form
the subspaces~$L_j$.
%
As in Section~\ref{sec:fewer},
given any set~$S$ of integer, let~$V(S)$ be the closure
in~$L^2$
of the span of the products~$z^nh$ for which~$n \in S$.
Also denote
the
subspaces~$V(Z\cap(\infty, 0))$ 
and~$V(Z\cap(\infty, 0])$ 
by~$M$ and~$M''$ respectively.
When~$1 \le j \le J$,
let~$M''_j$ be the part
of~$M''$
that is orthogonal
to~$L_j$;
let~$M''_0$
be the trivial subspace.
%
More generally,
denote the part
of~$V(\Z\cap(-\infty, b])$
that is orthogonal to~$V(\Z\cap(-\infty, a))$
by~$W\{a)^\perp,b]\}$,
and denote the corresponding part
of~$V(\Z\cap(-\infty, b))$
by~$W\{a)^\perp,b)\}$.
Then~$M''_j = W\{-k_j)^\perp,0]\}$.

Like
the subspaces~$M_j$
in Remark~\ref{rm:ComplementaryProof},
the
subspaces~$M''_j$
are finite-dimensional, and
form an increasing
nest.
The shifted
subspaces~$A_jM''$ and~$A_jM''_j$
are equal to~$V(Z\cap(\infty, k_j])$
and~$W\{0)^\perp,k_j]\}$
respectively.
In
particular,~$A_jh \in A_jM''$.
%

Denote the orthogonal projection
onto~$A_jM''_j$
by~$Q''_j$.
Since~$\hat g$ vanishes on the negative 
integers,~$g \perp M$.
Split~$A_jh$ as~$u + v$,
where~$u \in M$
and~$v \in A_jM''_j$.  Then
\[
(g, A_jh) = (g, u) + (g, v)
= (g, v)
= (g, Q''_jA_jh)
= (Q''_jg, A_jh).
\]
Much as
in Remark~\ref{rm:ComplementaryProof}, 
write
this as~$a''_j + b''_j$, where
\[
a''_j =
\ (\{Q''_{j}-Q''_{j-1}\}g, A_jh),
\quad\text{and}\quad
b''_j = (g, Q''_{j-1}A_jh),
\]
with the convention that~$Q''_0 = 0$.
Estimate~$\|a''\|_2$ as before.

When~$j > 1$, the
range of~$Q''_{j-1}$
is~$W\{0)^\perp,k_{j-1}]\}$,
which
is the image under~$A_j$
of~$W\{-k_j)^\perp,k_{j-1}-k_j]\}$.
Denote the
orthogonal
projection onto
the latter
subspace
by~$R''_j$.
Then~$b_j'' = (g, A_jR''_{j}h)$.

Now~$W\{-k_j)^\perp,k_{j-1}-k_j]\}$
is included 
in~$M''_j$.
By strong lacunarity,
it is also
included in~$L_{j-1}$,
and hence is orthogonal
to~$M''_{j-1}$.
The orthogonal projections~$P''_j$ and~$P''_{j-1}$ onto~$M''_j$ and~$M''_{j-1}$
therefore have the properties that~$R''_jP''_j = R''_j$ and~$R''_jP''_{j-1} = 0$.
So
\[
A_jR''_jh = A_jR''_j(P''_j-P''_{j-1})h,
\]
and~$\|b''\|_2 \le \|g\|_2\|h\|_2$.

As in~\cite{KP}, simpler choices
work
when~$g$ and~$\overline h$
are analytic.
Replace
the subspaces~$L_j$
with
the closures in~$L^2$ of the
spans
of the functions~$z^n$ for which~$n < -k_j$.
Let~$M''$ be
the closure in~$L^2$ of the
span
of the functions~$z^n$ for which~$n \le 0$.
Form the orthogonal
complements~$M''_j$
in~$M''$,
and estimate as above.

\label{rm:Part of L_j}

In general, one can also use the orthogonal complements
of each~$L_{j+1}$
in each~$L_j$,
that is~$W\{-k_{j+1})^\perp, -k_j)\}$.
These subspaces are not nested, but their
images~$A_{j+1}W\{-k_{j+1})^\perp, -k_j)\}$
are, because they
coincide
with the spaces~$W\{0)^\perp,k_{j+1}-k_j)\}$.
%

Denote the projection
onto
the latter
by~$Q'''_j$.
Much as
above,
the facts that~$A_jh \in V(\Z\cap(-\infty, k_{j+1}-k_j))$,
and~$g \perp V(\Z\cap(-\infty, 0))$
make~$(g, A_jh)$ equal to~$(Q'''_{j}g, A_jh)$.
In turn, that splits as~$a'''_j + b'''_j$,
where~$a'''_j = ((Q'''_j - Q'''_{j-1})g, A_jh)$,
and~$b'''_j = (g, Q'''_{j-1}A_jh)$,
again with the convention
that~$Q'''_0 = 0$.

When~$j > 1$,
denote
the projection
onto~$W\{-k_{j})^\perp, -k_{j-1})\}$ by~$P'''_{j-1}$.
Then~$b'''_j = (g, A_jP'''_{j-1}h)$.
The ranges~$W\{-k_{j})^\perp, -k_{j-1})\}$ of 
the
various
projections~$P'''_{j-1}$
are orthogonal,
because the corresponding
intervals~$[-k_{j}, -k_{j-1})$ are disjoint.
It follows that~$\|b'''\|_2 \le \|g\|_2\|h\|_2$.

\end{section}
 \end{appendix}

\bibliographystyle{amsplain}

\end{document}